\documentclass[12pt, a4paper]{amsart}
\usepackage{amscd, amssymb, pb-diagram}
\usepackage{palatino,euler,color}
\usepackage[margin=2.10cm]{geometry}
\usepackage{amsthm}
\usepackage[all]{xy}
\usepackage{tikz}
\usepackage{tile_macros}
\usetikzlibrary{arrows,backgrounds,decorations.markings}

\allowdisplaybreaks

\def\ker{\operatorname{ker}}

\def\Im{\operatorname{Im}}

\def\R{\mathbb{R}}

\def\Z{\mathbb{Z}}

\newtheorem{thm}{Theorem}[section]

\newtheorem{lemma}[thm]{Lemma}
\newtheorem{prop}[thm]{Proposition}

\theoremstyle{definition}

\theoremstyle{remark}

\numberwithin{equation}{section}

\tikzstyle{vertex}=[circle,thick]
\tikzstyle{goto}=[->,shorten >=1pt,>=stealth,semithick]

\pgfpathcircle{\pgfpoint{5cm}{0cm}} {2pt}

\begin{document}

\date{\today}
\title[Cohomology of the Pinwheel Tiling]{Cohomology of the Pinwheel Tiling}

\author[Dirk Frettl\"{o}h]{Dirk Frettl\"{o}h}
\address{Dirk Frettl\"{o}h, Faculty of Technology, Universit\"{a}t Bielefeld, Postfach 100131,33501 Bielefeld, Germany}
\email{dirk.frettloeh@math.uni-bielefeld.de}
\author[Benjamin Whitehead]{Benjamin Whitehead}
\address{Benjamin Whitehead, School of Mathematics and Applied Statistics, University of Wollongong, NSW 2522, Australia}
\email{bw219@uowmail.edu.au}
\author[Michael F. Whittaker]{Michael F. Whittaker}
\address{Michael F. Whittaker, School of Mathematics and Applied Statistics, University of Wollongong, NSW 2522, Australia}
\email{mfwhittaker@gmail.com}

\thanks{This research was supported by the German Research Council (DFG) within the Collaborative Research Centre 701, the Australian Mathematical Sciences Institute, and the Australian Research Council.}

\begin{abstract}
We provide a computation of the cohomology of the Pinwheel tiling using the Anderson-Putnam complex. A border forcing version of the Pinwheel tiling is constructed that allows an explicit construction of the complex for the quotient of the continuous hull by the circle. The final result is given using a spectral sequence argument of Barge, Diamond, Hunton, and Sadun. 
\end{abstract}

\maketitle

\section{Introduction}

A tiling of the plane is a collection of subsets of $\mathbb{R}^{2}$, called tiles, for which any intersection of the interiors of two distinct tiles is empty and whose union is all of $\mathbb{R}^2$. A tiling said to be {\em aperiodic} if it lacks translational periodicity. The most common method of producing aperiodic tilings is to use a substitution rule; a method for breaking each tile into smaller pieces, each of which is a scaled down copy of one of the original tiles, and then expanding so that each tile is congruent to one of the original tiles.

One of the most enigmatic substitution tilings is the Conway-Radin Pinwheel tiling, described in the seminal paper \cite{Rad}. The Pinwheel tiling is composed of two tile types, a $1$-$2$-$\sqrt{5}$ right triangle and its reflection. The substitution rule $\omega$ inflates each tile by a factor of $\sqrt{5}$ and decomposes the inflated triangle into 5 isometric copies of the original triangles as follows:
\begin{center} 
\begin{tikzpicture}[scale=1.0]
\begin{scope}[thick,xshift=0cm,yshift=0cm,rotate=0]
\begin{scope}[xshift=0.0cm,yshift=0.3cm,rotate=0]
\PLeft{0}{0}{0}
\draw[->] (1.25,0.25) -- (2.25,0.25);
\node at (1.75,0.5) {$\omega$};
\end{scope}
\begin{scope}[xshift=2.0cm,yshift=0cm,rotate=0]
\PRight{0}{0}{26.565} 
\PRight{1.118}{0}{26.565}
\PLeft{1.118}{0}{26.565}
\PLeftx{1.118}{0}{206.565}
\PRight{2.236}{0}{116.565}
\end{scope}
\end{scope}
\begin{scope}[thick,xshift=6cm,yshift=1.118cm,rotate=0,yscale=-1]
\begin{scope}[xshift=0.0cm,yshift=0.3cm,rotate=0]
\PLeft{0}{0}{0}
\draw[->] (1.25,0.25) -- (2.25,0.25);
\node at (1.75,0.0) {$\omega$};
\end{scope}
\begin{scope}[xshift=2.0cm,yshift=0cm,rotate=0]
\PRight{0}{0}{26.565} 
\PRight{1.118}{0}{26.565}
\PLeft{1.118}{0}{26.565}
\PLeftx{1.118}{0}{206.565}                                                                                                                                                                                                   
\PRight{2.236}{0}{116.565}
\end{scope}
\end{scope}
\end{tikzpicture}
\end{center}
The reason the Pinwheel tiling has been so difficult to analyse is twofold; tiles in the Pinwheel appear in an infinite number of distinct orientations and the Pinwheel tiling does not force its border. A substitution tiling is said to force its border if, for every proto-tile, there is a finite number of substitutions after which the tiles meeting the border of the collection of substituted tiles are known. The notion of border forcing was invented by Kellendonk in \cite[p.24]{Kel1}. Forcing the border has turned out to be essential for computing the cohomology of tilings \cite{AP,BDHS,Lorenzo.book}. To see that tiles in the Pinwheel tiling appear in an infinite number of distinct orientations we observe that after two iterations of the Pinwheel substitution there are two tiles that are rotated against each other by the angle $\arctan(\frac12)$, which is an irrational fraction of $2\pi$ so an induction argument shows that orientations of tiles are dense in the circle.

There is a compact topological space associated with the Pinwheel tiling called the continuous hull and denoted $\Omega$. The continuous hull is locally homeomorphic to the product of a Cantor set, a circle, and a disc \cite{ORS,BG,Sad,Lorenzo.book, Whi}. The substitution rule $\omega$ is a homeomorphism on the continuous hull providing a method to reconstruct the continuous hull using techniques invented by Anderson and Putnam in \cite{AP}. Computing the cohomology groups of the continuous hull has been the focus of a significant amount of research since its introduction in \cite{AP}.

 In \cite{BDHS}, Barge, Diamond, Hunton, and Sadun give the first computation of the cohomology of the Pinwheel tiling. Their computation uses Barge-Diamond collaring and a higher dimensional analogue of the Barge-Diamond complex \cite{BD} to compute the cohomology of the quotient of the continuous hull by the circle. In order to obtain the cohomology of the continuous hull the obvious approach is to use the Gysin sequence to realise the continuous hull as a fibre bundle over the quotient. Unfortunately the map from the continuous hull to the quotient of the continuous hull by the circle is not an honest fibration, there are six smooth singular fibres. In the quotient space these singular fibres correspond to cone singularities, specifically at the points with $180^\circ$ rotational symmetry. To overcome this problem the authors go back to the spectral sequence used to derive the Gysin sequence and adjust for the singular fibres. Torsion in the quotient of the continuous hull by the circle make the spectral sequence argument extremely subtle.

In this note we compute the cohomology of the continuous hull of the Pinwheel tiling using Anderson and Putnam's original approach. The first step is to produce a border forcing version of the Pinwheel tiling, which we call the BFPinwheel. The Anderson-Putnam complex of the BFPinwheel leads to the cohomology of an approximant. A direct limit computation gives the cohomology of the quotient of the continuous hull of the BFPinwheel by the circle. We are able to explicitly present the generators of the cohomology groups and attempt to provide all the details of the computation. To complete the calculation we identify the cone singularities in the BFPinwheel and apply the spectral sequence arguments in \cite{BDHS}.

\subsection{Acknowledgements}
We would like to thank Michael Baake, Lorenzo Sadun, and Aidan Sims for several helpful suggestions. We are especially grateful to Lorenzo for expanding on the constructions in \cite{BDHS} and providing insightful feedback on an early version of the paper.

\section{A border forcing version of the Pinwheel tiling}

In this section we produce a version of the Pinwheel tiling that forces it border. The first step is to pass to a kite-rectangle version of the Pinwheel tiling and use this version to produce the BFPinwheel.

In \cite{BFG1}, Baake, Frettl\"{o}h, and Grimm observed that given a Pinwheel tiling one can remove every hypotenuse to get a kite-rectangle version of the Pinwheel tiling, which we will call the KRPinwheel. They showed that the KRPinwheel tiling is mutually locally derivable to the original Pinwheel tiling, meaning that there is a homeomorphism between the two continuous hulls and the homeomorphism can be determined locally in every tiling. The following figure shows the local bijection from the Pinwheel to the KRPinwheel. The dots emphasise the decomposition although the dots can be completely determined by local patches.
\begin{center} 
\begin{tikzpicture}[scale=1.0]
\begin{scope}[xshift=0cm,yshift=0cm]
\begin{scope}[thick,xshift=0.0cm,yshift=0.0cm,rotate=0]
\draw (0,0) -- (1,0);
\draw (1,0) -- (1,0.5);
\draw (0,0) -- (1,0.5);
\end{scope}
\begin{scope}[thick,xshift=1.0cm,yshift=0.5cm,rotate=180]
\draw (0,0) -- (1,0);
\draw (1,0) -- (1,0.5);
\end{scope}
\begin{scope}
\draw[->] (1.25,0.25) -- (2.0,0.25);
\end{scope}
\begin{scope}[xshift=2.25cm,yshift=0cm]
\Left{0}{0}{0}
\end{scope}
\end{scope}
\begin{scope}[xshift=5cm,yshift=0cm]
\begin{scope}[thick,xshift=0.0cm,yshift=0.0cm,rotate=0]
\draw (0,0) -- (1,0);
\draw (1,0) -- (1,0.5);
\draw (0,0.5) -- (1,0);
\end{scope}
\begin{scope}[thick,xshift=1.0cm,yshift=0.5cm,rotate=180]
\draw (0,0) -- (1,0);
\draw (1,0) -- (1,0.5);
\end{scope}
\begin{scope}
\draw[->] (1.25,0.25) -- (2.0,0.25);
\end{scope}
\begin{scope}[xshift=2.25cm,yshift=0cm]
\Right{0}{0}{0}
\end{scope}
\end{scope}
\begin{scope}[xshift=10cm,yshift=0cm]
\begin{scope}[thick,xshift=0.0cm,yshift=0.0cm,rotate=0]
\draw (0,0) -- (1,0);
\draw (1,0) -- (1,0.5);
\draw (0,0) -- (1,0.5);
\end{scope}
\begin{scope}[thick,xshift=0.0cm,yshift=0.0cm,rotate=53.13]
\draw (0,0) -- (1,0);
\draw (1,0) -- (1,-0.5);
\end{scope}
\begin{scope}
\draw[->] (1.25,0.25) -- (2.0,0.25);
\end{scope}
\begin{scope}[xshift=2.25cm,yshift=0cm]
\KLeft{0}{0}{0}
\end{scope}
\end{scope}
\end{tikzpicture}
\end{center}
The cost of going to the KRPinwheel tiling is that we now need two of the original Pinwheel substitutions in order to achieve the KRPinwheel substitution, which we also denote by $\omega$.
\begin{center} 
\begin{tikzpicture}[scale=1.0]
\begin{scope}[xshift=6.5cm,yshift=0cm,rotate=0]
\begin{scope}[xshift=-0.0cm,yshift=0cm,rotate=0]
\Left{-2.25}{2.5}{-90}
\draw[->] (-1.5,2) -- (-0.75,2);
\node at (-1.1,2.3) {$\omega$};
\end{scope}
\begin{scope}[xshift=0cm,yshift=0cm]
\Left{0.0}{3.5}{90} \Right{0.0}{4.0}{0} \Left{1.0}{4.0}{0}
\Right{0.0}{2.5}{90} \KLeft{0.0}{3.5}{-90} \KLeft{1.0}{4.0}{180} \KLeft{1.5}{3.0}{90}\Right{2.0}{3.0}{90}
\KLeftx{-0.5}{2.0}{180} \Leftx{0.7}{3.6}{-36.87} \KLeft{2.0}{2.0}{90}
\Left{1.0}{1.5}{143.13} \Rightx{1.0}{1.5}{143.13} \Leftx{1.3}{1.9}{143.13} \KLeft{1.0}{1.5}{0}
\KLeft{-0.5}{2.0}{-90}  \Leftx{0.8}{0.4}{143.13} \KLeft{1.5}{0.5}{90} \Right{2.0}{0.5}{90}
\Right{0.0}{0.0}{90} \KLeft{0.0}{1.0}{-90} \KLeft{0.5}{0.0}{0}
\Left{-0.5}{-0.5}{0} \Right{0.5}{-0.5}{0} \Left{2.0}{-0.5}{90}
\end{scope}
\end{scope}
\begin{scope}[xshift=13cm,yshift=0cm,rotate=0]
\begin{scope}[xshift=-1.5cm,yshift=0cm,rotate=0]
\Right{-2.25}{2.5}{-90}
\draw[->] (-1.5,2) -- (-0.75,2);
\node at (-1.1,2.3) {$\omega$};
\end{scope}
\begin{scope}[xshift=0cm,yshift=0cm, xscale=-1]
\Left{0.0}{3.5}{90} \Right{0.0}{4.0}{0} \Left{1.0}{4.0}{0}
\Right{0.0}{2.5}{90} \KLeft{0.0}{3.5}{-90} \KLeft{1.0}{4.0}{180} \KLeft{1.5}{3.0}{90}\Right{2.0}{3.0}{90}
\KLeftx{-0.5}{2.0}{180} \Leftx{0.7}{3.6}{-36.87} \KLeft{2.0}{2.0}{90}
\Left{1.0}{1.5}{143.13} \Rightx{1.0}{1.5}{143.13} \Leftx{1.3}{1.9}{143.13} \KLeft{1.0}{1.5}{0}
\KLeft{-0.5}{2.0}{-90}  \Leftx{0.8}{0.4}{143.13} \KLeft{1.5}{0.5}{90} \Right{2.0}{0.5}{90}
\Right{0.0}{0.0}{90} \KLeft{0.0}{1.0}{-90} \KLeft{0.5}{0.0}{0}
\Left{-0.5}{-0.5}{0} \Right{0.5}{-0.5}{0} \Left{2.0}{-0.5}{90}
\end{scope}
\end{scope}
\begin{scope}[xshift=0cm,yshift=0cm,rotate=0]
\begin{scope}[xshift=-0.0cm,yshift=0cm,rotate=0]
\KLeft{-2.5}{2.5}{-90}
\draw[->] (-1.5,2) -- (-0.75,2);
\node at (-1.1,2.3) {$\omega$};
\end{scope}
\begin{scope}[xshift=-0.5cm,yshift=4.5cm,rotate=180]
\begin{scope}[xshift=-2cm,yshift=0.5cm]
\Lefth{0.0}{3.5}{90} \Right{0.0}{4.0}{0} \Left{1.0}{4.0}{0}
\KLefth{0.0}{3.5}{-90} \KLeft{1.0}{4.0}{180} \KLeft{1.5}{3.0}{90}\Right{2.0}{3.0}{90}
\Leftx{0.7}{3.6}{-36.87} \KLeft{2.0}{2.0}{90}
\Rightxh{1.0}{1.5}{143.13} \Leftx{1.3}{1.9}{143.13} \KLeft{1.0}{1.5}{0}
\Lefth{1.5}{0.5}{90} \Right{2.0}{0.5}{90}
\Lefth{2.0}{-0.5}{90}
\end{scope}
\begin{scope}[thick,rotate=53.13, xscale=-1]
\begin{scope}[xshift=-2cm,yshift=0.5cm]
\Lefth{0.0}{3.5}{90} \Right{0.0}{4.0}{0} \Left{1.0}{4.0}{0}
\KLefth{0.0}{3.5}{-90} \KLeft{1.0}{4.0}{180} \KLeft{1.5}{3.0}{90}\Right{2.0}{3.0}{90}
\Leftx{0.7}{3.6}{-36.87} \KLeft{2.0}{2.0}{90}
\Rightxh{1.0}{1.5}{143.13} \Leftx{1.3}{1.9}{143.13} \KLeft{1.0}{1.5}{0}
\Lefth{1.5}{0.5}{90} \Right{2.0}{0.5}{90}
\Lefth{2.0}{-0.5}{90}
\end{scope}
\end{scope}
\end{scope}
\end{scope}
\end{tikzpicture}
\end{center}

Using the KRPinwheel we construct a border forcing version of the Pinwheel tiling which we achieve by producing a set of collared tiles. We will call this new version the BFPinwheel tiling. A collared tile is a tile labelled by the pattern of its nearest neighbours; that is, two tiles have distinct labels if they are different tiles or if there are two different tiles sharing any edge or vertex with the tile. The BFPinwheel is constructed by considering all possible collared tiles in the KRPinwheel. Labelling the collared tiles gives 83 distinct tiles, 31 kites and 26 of each oriented rectangle. The collared tiles appear in Appendix \ref{collared}. It is important to note that each tile is still an isometric copy of one the KRPinwheel tiles and the labelling merely specifies the collar.

\section{The Anderson-Putnam complex of the Pinwheel tiling}\label{cohomologyofgamma}

In this section we compute the cohomology of the quotient space $\Omega_0 = \Omega/S^1$ for the BFPinwheel. Computing cohomology for tilings with infinite rotational symmetry is the topic of \cite[Chapter 4]{Lorenzo.book} and we direct the reader there for further information. For the Pinwheel tiling, the quotient $\Omega_0$ has been studied by several authors \cite{BDHS, Lorenzo.book, Whi} and is equivalent to considering the subset of tilings in the continuous hull with the tile containing the origin of $\mathbb{R}^2$ in a fixed standard orientation. The same holds true for the BFPinwheel and we declare the orientations of the 83 tiles in Appendix \ref{app_tiles} to be the standard orientation.

Starting with the 83 tiles, a CW complex $\Gamma/S^1$ is defined by identifying edges and vertices in two tiles whenever they are common to both tiles in any patch of tiles appearing in a KRPinwheel tiling, and then arranging the tiles in standard orientation. It will be useful to label the 83 tiles in terms of their tile type: the 31 kite tiles are labelled $K1, \ldots, K31$, 26 left rectangle tiles by $L1, \ldots,L26$, and 26 right rectangle tiles by $R1, \ldots,R26$. After identification of vertices and edges we obtain 138 edges, labelled $e_1, \ldots, e_{138}$, and 73 vertices, labelled $v_1, \ldots, v_{73}$. The CW complex $\Gamma/S^1$ appears in Appendix \ref{app_tiles}.

To use Anderson and Putnam's machinery we have from \cite[\S 4.4]{Lorenzo.book} that
\[
\Omega_0 = \Omega / S_1 = \underset{\leftarrow}{\lim} (\Gamma/S^1 \leftarrow \Gamma/S^1 \leftarrow \Gamma/S^1 \cdots )
\]
with the map induced by the substitution on $\Gamma/S^1$. Since cohomology is contravariant, inverse limits of space turns into direct limits of groups. Thus, we will compute the cohomology of $\Omega_0$ via
\begin{equation}\label{inv_limit}
\check{H}^i(\Omega_0)=\underset{\rightarrow}{\lim}(H^i(\Gamma/S^1),A_i^*)
\end{equation}
where $A_i^*$ denotes the map induced by the substitution on $H^i(\Gamma)$.

To compute the integer cohomology of $\Gamma/S^1$ we begin by defining the cochain complex. Let $C^0$, $C^1$ and $C^2$ denote the functions from the vertices, edges, and tiles into the group of integers, respectively. The boundary maps $\delta_0: C^0 \rightarrow C^1$ and $\delta_1: C^1 \rightarrow C^2$ are defined using the dual of the homology boundary maps. More precisely, we define a matrix for the homology boundary maps that gives rise to the boundary maps in cohomology by taking the transpose of the matrix\footnote{The substitution and boundary matrices are available upon request as Maple worksheets and in PDF form.}. The cochain complex is given in the diagram below which is used to compute the cohomology groups of $\Gamma/S^1$.
\begin{align*}
C^0 & \overset{\delta_0}{\longrightarrow} C^1 \overset{\delta_1}{\longrightarrow} C^2 \overset{\delta_2}{\longrightarrow} 0 \\
\mathbb{Z}^{73} & \overset{\delta_0}{\longrightarrow} \mathbb{Z}^{138} \overset{\delta_1}{\longrightarrow} \mathbb{Z}^{83} \overset{\delta_2}{\longrightarrow} 0.
\end{align*}

From general theory we have that $H^0(\Gamma/S^1) = \ker(\delta_0)$. The kernel is generated by the function that assigns the value one to every vertex and we obtain $H^0(\Gamma/S^1) \cong \Z$.

The computation of the remainder of the cohomology groups is more complicated. We have $H^{1}(\Gamma/S^1)=\ker(\delta_1)/\textrm{Im}(\delta_0)$. For $1 \leq i \leq 138$, by abuse of notation let $e_i$ denote the function in $C^1$ which assigns $1$ to the edge $e_i$ and $0$ to all other edges; that is, the pointmass function in $C^1$ associated to the edge $e_i$. Our computations show that the group $H^1(\Gamma/S^1)$ is generated by the class represented by the function
\begin{align}
\notag
f_1:=&-e_1+e_3-e_7+e_8-e_{10}+e_{14}+e_{17}-e_{18}+e_{19}-e_{26}+e_{30}+e_{32}+e_{35} \\
\label{function_h1}
&+e_{36}-e_{38}+e_{42}-e_{45}-e_{49}+e_{51}+e_{58}+e_{61}+e_{63}+e_{65}+e_{70}-e_{74}-e_{78}\\
\notag
&+e_{79}-e_{80}-e_{81}+e_{83}-e_{84}-e_{85}-e_{90}+e_{95}-e_{99}+e_{101}+e_{106}-e_{109}+e_{125}.
\end{align}
Thus, we have $H^{1}(\Gamma/S^1) \cong \mathbb{Z}$.

The computation of $H^{2}(\Gamma/S^1)=\ker(\delta_2)/\Im(\delta_1)$ is even more involved. Let $t_1, \ldots,t_{83}$ denote the pointmass functions in $C^2$ associated to the corresponding tile in $K1, \ldots, K31$, $L1, \ldots,L26$, $R1, \ldots,R26$ in order. For example, $t_{12}$ assigns the value $1$ to tile $K12$ and $0$ to all other tiles. Similarly, $t_{56}$ is the pointmass function associated with tile $L25$. Our computations show that the group $H^{2}(\Gamma/S^1)$ is generated by the classes containing the 19 functions $g_j \in C^2$ described below in \eqref{generator_h2}. For $j=1, \ldots, 19$, denote the entries of the vectors in Appendix \ref{matrices} by $y_{ij}$ for $i = 1, \ldots, 83$ and let 
\begin{equation}\label{generator_h2}
g_j = \sum_{i=1}^{83} y_{ij} t_i \qquad j\in\{1, \ldots, 19\}.
\end{equation}
The classes of $H^2(\Gamma/S^1)$ are represented by $g_j + \Im(\delta_1)$.

We obtain $H^2(\Gamma/S^1) \cong \mathbb{Z}^{18} \oplus \mathbb{Z}_2$ where the generator of $\Z_2$ is represented by the function $g_{19}$. More specifically, the generator of $\mathbb{Z}_2$ is $t_{59}-2 t_{64}+\Im(\delta_1)$ where $t_{59}$ and $t_{64}$ are the pointmass functions associated with tiles $R2$ and $R7$ respectively. 

We have proven:

\begin{prop}\label{gamma/S}
Let $\Gamma/S^1$ be the Anderson-Putnam complex of the KRPinwheel tiling described in Appendix \ref{app_tiles}. The integer cohomology groups are given by
\begin{align*}
H^{0}(\Gamma/S^1)&\cong \mathbb{Z}\\
H^{1}(\Gamma/S^1)& \cong \mathbb{Z}\\
H^{2}(\Gamma/S^1)& \cong\mathbb{Z}^{18}\oplus\mathbb{Z}_2.
\end{align*}
\end{prop}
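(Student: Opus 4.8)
The statement is a finite linear-algebra computation over $\Z$, so the plan is to extract the two cellular boundary maps from the CW structure of $\Gamma/S^1$ in Appendix~\ref{app_tiles}, dualise them, and read off the cohomology of the resulting cochain complex using Smith normal forms. First I would record the boundary maps: the $1$-skeleton determines $\partial_1\colon\Z^{138}\to\Z^{73}$, sending each oriented edge $e_i$ to the difference of its two endpoints, and the attaching map of each of the $83$ two-cells, read with orientation around its boundary, determines $\partial_2\colon\Z^{83}\to\Z^{138}$. Transposing gives $\delta_0=\partial_1^{\,t}$ and $\delta_1=\partial_2^{\,t}$ in the complex $\Z^{73}\overset{\delta_0}{\longrightarrow}\Z^{138}\overset{\delta_1}{\longrightarrow}\Z^{83}\longrightarrow 0$, whose three cohomology groups are $\ker\delta_0$, $\ker\delta_1/\Im\delta_0$, and $\Z^{83}/\Im\delta_1$.

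The core of the calculation is the Smith normal form of $\partial_1$ and $\partial_2$. Since $\partial_1$ is the incidence matrix of a connected graph (which I would verify from the edge identifications), it has rank $72$ and all its nonzero invariant factors equal to $1$; thus $\Im\delta_0$ is a rank-$72$ \emph{direct summand} of $\Z^{138}$ with torsion-free quotient. The degree-zero group is then immediate: $\ker\delta_0$ consists exactly of the functions constant on the vertices, so $H^0\cong\Z$. A Smith normal form of $\partial_2$ gives $\operatorname{rank}\delta_1=65$, whence $\ker\delta_1$ has rank $138-65=73$ and $H^1=\ker\delta_1/\Im\delta_0$ has free rank $73-72=1$. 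Because $\Im\delta_0$ is a direct summand the quotient carries no torsion, so $H^1\cong\Z$, and I would confirm that the explicit class $f_1$ in \eqref{function_h1} satisfies $\delta_1 f_1=0$ and $f_1\notin\Im\delta_0$, so that it generates. As a consistency check the Euler characteristic gives $73-138+83=18=1-1+\operatorname{rank}H^2$, forcing the free rank of $H^2$ to be $18$.

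The substance of the argument is $H^2=\Z^{83}/\Im\delta_1$, the cokernel of $\delta_1$, and here the main obstacle is pinning down the torsion. I expect the Smith normal form of $\delta_1$ to have $65$ nonzero invariant factors, of which exactly one equals $2$ and the remaining $64$ equal $1$, so that the cokernel is $\Z^{83-65}\oplus\Z/2\cong\Z^{18}\oplus\Z_2$. Producing this diagonalisation cleanly by hand is infeasible, so I would perform the reduction symbolically (the Maple worksheets referred to in the footnote) and then certify the output independently. The danger is that a single erroneous invariant factor masquerades as torsion, so the $\Z_2$ must be separated rigorously from any artifact of the reduction.

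To do this I would, first, check that the classes $g_1,\dots,g_{18}$ of \eqref{generator_h2} project to a $\Z$-basis of the free part of the cokernel. Second, for the delicate torsion class $g_{19}=t_{59}-2t_{64}$, I would exhibit two explicit witnesses: an integer combination of tile boundaries showing $2(t_{59}-2t_{64})\in\Im\delta_1$, and a $\Z/2$-valued linear functional on $\Z^{83}$ that vanishes on $\Im\delta_1$ but is nonzero on $t_{59}-2t_{64}$, proving $t_{59}-2t_{64}\notin\Im\delta_1$. Together these force $t_{59}-2t_{64}$ to have order exactly $2$ in the cokernel, completing the identification $H^2(\Gamma/S^1)\cong\Z^{18}\oplus\Z_2$ and thereby the proposition.
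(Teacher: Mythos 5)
Your proposal is the same computation the paper performs: build the cochain complex $\Z^{73}\to\Z^{138}\to\Z^{83}$ from the dualised boundary maps of the complex in Appendix \ref{app_tiles}, compute kernels and images by machine (the paper's Maple worksheets), and read off $H^0\cong\Z$, $H^1\cong\Z$ generated by $f_1$, and $H^2\cong\Z^{18}\oplus\Z_2$ with torsion generator $t_{59}-2t_{64}$. Your added consistency checks (connectedness of the $1$-skeleton, Euler characteristic, and the two explicit witnesses certifying the order of the torsion class) are sensible refinements but do not change the route.
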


The next step is to compute the \^{C}ech cohomology of $\Omega_0$ using the inverse limit construction in Equation \eqref{inv_limit}. We state the main result of the section.

\begin{thm}\label{Cech}
Let $\Omega_0$ be the quotient of the continuous hull $\Omega/S^1$ for the KRPinwheel tiling. The \^{C}ech cohomology groups of $\Omega_0$ are given by
\begin{align*}
\check{H}^{0}(\Omega_0)&\cong\mathbb{Z}\\
\check{H}^{1}(\Omega_0)&\cong\mathbb{Z}\\
\check{H}^2(\Omega_0)&\cong \mathbb{Z}[\frac{1}{25}]\oplus\mathbb{Z}[\frac{1}{3}]^2\oplus \mathbb{Z}^5 \oplus \mathbb{Z}_2.
\end{align*}
\end{thm}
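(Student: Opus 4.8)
The plan is to feed each group from Proposition \ref{gamma/S} into the direct-limit formula \eqref{inv_limit}. The essential input is the map $A_i^*$ that the KRPinwheel substitution $\omega$ induces on $H^i(\Gamma/S^1)$, so the first step is to extract these maps from the tile-substitution matrix on the $83$ collared tiles. On cochains, $A_2^*$ is the transpose of this substitution matrix acting on $C^2$; descending to cohomology (and likewise on $C^1$ and $C^0$) produces $A_2^*$, $A_1^*$, and $A_0^*$. With these in hand I would compute the three colimits $\underset{\rightarrow}{\lim}(H^i(\Gamma/S^1), A_i^*)$ separately.

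Degrees $0$ and $1$ are immediate. Since $H^0(\Gamma/S^1) \cong \Z$ is generated by the all-ones vertex function, which $\omega$ fixes, $A_0^*$ is the identity and the limit is $\Z$. For $H^1(\Gamma/S^1) \cong \Z$, generated by the class of $f_1$ in \eqref{function_h1}, I would check that $A_1^*$ carries this class to $\pm$ itself, so that $\underset{\rightarrow}{\lim}(\Z, \pm 1) \cong \Z$; the only labour is verifying the sign by pairing the pullback of $f_1$ against the generator.

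The substantive case is degree $2$, where $H^2(\Gamma/S^1) \cong \Z^{18} \oplus \Z_2$. I would analyse the free part via the integer matrix $M := A_2^*|_{\Z^{18}}$ through its eigenvalue structure. I anticipate a $10$-dimensional generalised kernel (eigenvalue $0$) that dies in the limit, accounting for the drop from rank $18$ to rank $8$; on the complementary eventual image $\bigcap_k M^k(\Z^{18})$ the map acts invertibly over $\Q$ with eigenvalues $25$, $3$ (multiplicity two), and $\pm 1$ (total multiplicity five). The value $25$ is forced as the area-inflation factor of the doubled Pinwheel substitution, while the two $3$'s and the roots of unity are read from the characteristic polynomial of $M$. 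Using $\underset{\rightarrow}{\lim}(\Z, k) \cong \Z[\tfrac{1}{k}]$ per invariant factor, with $k = \pm 1$ contributing an undivided $\Z$, the free part assembles to $\Z[\tfrac{1}{25}] \oplus \Z[\tfrac{1}{3}]^2 \oplus \Z^5$.

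The hard part will be the integral, rather than merely rational, structure. Over $\Q$ the eigenvalue splitting is routine, but to obtain \emph{exactly} $\Z[\tfrac{1}{25}] \oplus \Z[\tfrac{1}{3}]^2 \oplus \Z^5$ I must exhibit an $M$-adapted filtration of $\Z^{18}$ realising the correct Smith normal form at the primes $2$, $3$, $5$, rather than a coarser localization; I would do this by computing explicit generators of the eventual image and tracking their $M$-images over the large substitution matrix. Since directed colimits preserve torsion-freeness, the free part contributes no torsion, so the $\Z_2$ in the final answer must come entirely from the torsion summand of $H^2(\Gamma/S^1)$. Confirming that $A_2^*$ acts as the identity on this $\Z_2$ (generated by $t_{59} - 2t_{64}$), and hence that the limit retains it as a direct summand, is the remaining delicate point that completes the identification of $\check{H}^2(\Omega_0)$.
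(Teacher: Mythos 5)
Your proposal follows essentially the same route as the paper: degrees $0$ and $1$ are handled by observing $A_0^*$ and $A_1^*$ act as the identity on the generators, and in degree $2$ the paper likewise quotients by the ($10$-dimensional) kernel of $A_2^*$, splits off the coordinates on which the induced map is the identity together with the $\mathbb{Z}_2$ summand, and resolves the integral structure of the remaining rank-$5$ piece via the eigenvectors for $25$, $3$, $3$ and a short exact sequence $0\to\mathbb{Z}^3\to\mathbb{Z}^5\to\mathbb{Z}^2\to 0$ that splits because the quotient is free. All of your anticipated numerical facts (rank drop $18\to 8$, eigenvalues $25$, $3$, $3$ and five eigenvalues equal to $1$, and $A_2^*$ fixing the $\mathbb{Z}_2$ generator) agree with the paper's explicit matrices, so the plan is sound and matches the published argument.
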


We will prove Theorem \ref{Cech} using a sequence of lemmas.

\begin{lemma}
Let $\Omega_0$ be the quotient of the continuous hull $\Omega/S^1$ for the KRPinwheel tiling, then $\check{H}^{0}(\Omega_0) \cong\mathbb{Z}$.
\end{lemma}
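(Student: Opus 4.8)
The plan is to compute $\check{H}^0(\Omega_0)$ directly from the direct limit formula in Equation \eqref{inv_limit}, namely $\check{H}^0(\Omega_0) = \varinjlim (H^0(\Gamma/S^1), A_0^*)$. By Proposition \ref{gamma/S} we already know that $H^0(\Gamma/S^1) \cong \mathbb{Z}$, generated by the cocycle assigning the value $1$ to every vertex. So the entire calculation reduces to understanding the single induced map $A_0^* \colon H^0(\Gamma/S^1) \to H^0(\Gamma/S^1)$ and then taking the direct limit of the resulting stationary sequence $\mathbb{Z} \xrightarrow{A_0^*} \mathbb{Z} \xrightarrow{A_0^*} \mathbb{Z} \to \cdots$.

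First I would identify the map $A_0^*$ explicitly. The substitution $\omega$ induces a simplicial map on $\Gamma/S^1$, which in turn induces the pullback $A_0^*$ on $H^0$. Since $H^0$ is generated by the constant cocycle $\mathbf{1}$ that takes the value $1$ on each of the $73$ vertices, and the pullback of a function constant on vertices is again constant on vertices with the same value, I expect $A_0^*(\mathbf{1}) = \mathbf{1}$, i.e. $A_0^*$ is the identity on $\mathbb{Z}$. Concretely this is because $\check{H}^0$ of a connected space counts connected components, the substitution map is surjective, and $\Gamma/S^1$ is connected, so the induced map on $H^0 \cong \mathbb{Z}$ must be the identity.

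With $A_0^* = \operatorname{id}$, the direct limit of the constant sequence $\mathbb{Z} \xrightarrow{\operatorname{id}} \mathbb{Z} \xrightarrow{\operatorname{id}} \cdots$ is simply $\mathbb{Z}$, which gives $\check{H}^0(\Omega_0) \cong \mathbb{Z}$ as claimed. This matches the expectation that $\check{H}^0$ detects the number of path components of the connected inverse limit space $\Omega_0$.

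The only real point requiring care—the main obstacle, such as it is—is verifying that $A_0^*$ is indeed the identity rather than multiplication by some other integer. I would justify this by appealing to connectedness: since $\Gamma/S^1$ is connected and the substitution-induced map respects the generator $\mathbf{1}$, the action on $H^0 \cong \mathbb{Z}$ is forced to be the identity. No torsion or extension issues arise at the $H^0$ level, so the direct limit is transparent once $A_0^*$ is pinned down.
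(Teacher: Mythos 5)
Your proposal is correct and follows essentially the same route as the paper: the paper simply asserts that $A_0^*$ acts as the identity on the generator of $H^0(\Gamma/S^1)\cong\mathbb{Z}$ and concludes $\check{H}^0(\Omega_0)\cong\varinjlim(\mathbb{Z},\id)\cong\mathbb{Z}$. Your additional justification that the pullback of the constant cocycle $\mathbf{1}$ under the substitution-induced map is again $\mathbf{1}$ (by connectedness of $\Gamma/S^1$) is a valid and slightly more explicit version of the same argument.
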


\begin{proof}
The induced matrix $A_0^*$ is the identity on the generator of $H^0(\Gamma/S^1)$. Therefore,
\[
\check{H}^{0}(\Omega_0)=\underset{\rightarrow}{\lim}(H^0(\Gamma/S^1),A_0^*)\cong\underset{\rightarrow}{\lim}(\Z,id)\cong\mathbb{Z}. \qedhere
\]
\end{proof}

\begin{lemma}
Let $\Omega_0$ be the quotient of the continuous hull $\Omega/S^1$ for the KRPinwheel tiling, then $\check{H}^1(\Omega_0) \cong\mathbb{Z}$.
\end{lemma}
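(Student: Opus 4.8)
The plan is to use the direct limit description of \v{C}ech cohomology from Equation \eqref{inv_limit}, namely $\check{H}^1(\Omega_0) = \underset{\rightarrow}{\lim}(H^1(\Gamma/S^1), A_1^*)$, together with the fact established in Proposition \ref{gamma/S} that $H^1(\Gamma/S^1) \cong \mathbb{Z}$ is generated by the class $[f_1]$ of the cocycle $f_1$ in \eqref{function_h1}. Since the limit is taken over a single endomorphism $A_1^*$ of the rank-one free group $\mathbb{Z}$, the whole computation reduces to identifying the integer $\lambda$ by which $A_1^*$ acts on the generator $[f_1]$: a direct limit of $\mathbb{Z}$ along multiplication by $\lambda$ is $0$ when $\lambda=0$, is $\mathbb{Z}[\frac{1}{\lambda}]$ when $|\lambda|>1$, and is $\mathbb{Z}$ precisely when $\lambda = \pm 1$. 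So the target answer $\mathbb{Z}$ amounts to the claim $\lambda = \pm 1$.

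First I would compute the cochain $A_1^* f_1 \in C^1$ explicitly. The map $A_1^*$ is the pullback induced by the substitution $\omega$ on $\Gamma/S^1$, so on $C^1$ it is given by the transpose of the edge-substitution matrix, which records for each edge $e_j$ the signed collection of edges appearing in $\omega(e_j)$. Applying this matrix to the $39$-term expression for $f_1$ produces another integer cochain in $C^1$, which automatically lies in $\ker(\delta_1)$ since $A_1^*$ is a cochain map.

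Next I would verify that this cochain is cohomologous to $\pm f_1$; that is, I would exhibit a $0$-cochain $h \in C^0$ with
\[
A_1^* f_1 = \pm f_1 + \delta_0(h).
\]
Because $H^1(\Gamma/S^1) \cong \mathbb{Z}$ with generator $[f_1]$, the class $A_1^*[f_1]$ must equal $\lambda[f_1]$ for a unique integer $\lambda$, and producing such an $h$ (with the sign fixing $\lambda$) certifies the value of $\lambda$ by comparing $A_1^* f_1$ with $f_1$ in $\ker(\delta_1)/\Im(\delta_0)$. I expect $\lambda = \pm 1$. Once this is confirmed, the limit collapses, $\underset{\rightarrow}{\lim}(\mathbb{Z}, \times(\pm 1)) \cong \mathbb{Z}$, giving $\check{H}^1(\Omega_0) \cong \mathbb{Z}$.

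The main obstacle is the explicit computation in the previous step: the edge-substitution matrix is $138 \times 138$, so forming $A_1^* f_1$ and then finding the reducing $0$-cochain $h$ (or otherwise certifying equality of classes over $\mathbb{Z}$) is the computational heart of the argument. This is best handled by the same integer linear-algebra computation used to establish Proposition \ref{gamma/S}; the only conceptual point is that any outcome other than $\lambda = \pm 1$ would change the isomorphism type of the limit, so confirming $\lambda = \pm 1$ is exactly what yields $\mathbb{Z}$.
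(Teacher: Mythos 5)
Your proposal is correct and follows essentially the same route as the paper: both reduce the computation to determining the scalar by which $A_1^*$ acts on the generator $[f_1]$ of $H^1(\Gamma/S^1)\cong\mathbb{Z}$ and then take the direct limit. The paper simply reports the outcome of that integer linear-algebra step --- $A_1^*$ acts as the identity on $[f_1]$ (so $\lambda=1$, not merely $\pm1$) --- and concludes $\check{H}^1(\Omega_0)\cong\underset{\rightarrow}{\lim}(\mathbb{Z},\id)\cong\mathbb{Z}$, exactly as you outline.
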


\begin{proof}
The induced matrix $A_1^*$ is the identity map on the generator $f_1+\Im \delta_0$ of $H^1(\Gamma/S^1)$, where $f_1$ is described in \eqref{function_h1}. Therefore,
\[
\check{H}^{1}(\Omega_0)=\underset{\rightarrow}{\lim}(H^1(\Gamma/S^1),A_1^*)\cong\underset{\rightarrow}{\lim}(\Z,id)\cong\mathbb{Z}.\qedhere
\]
\end{proof}

The group $\check{H}^2(\Omega_0)$ is the most difficult to compute.

\begin{lemma}
Let $\Omega_0$ be the quotient of the continuous hull $\Omega/S^1$ for the KRPinwheel tiling, then $\check{H}^{2}(\Omega_0) \cong\mathbb{Z}[\frac{1}{25}]\oplus\mathbb{Z}[\frac{1}{3}]^2\oplus \mathbb{Z}^5 \oplus \mathbb{Z}_2$.
\end{lemma}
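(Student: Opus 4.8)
The plan is to compute the direct limit $\check{H}^2(\Omega_0) = \varinjlim(H^2(\Gamma/S^1), A_2^*)$ using the structure of $H^2(\Gamma/S^1) \cong \mathbb{Z}^{18} \oplus \mathbb{Z}_2$ established in Proposition \ref{gamma/S}. Since the direct limit is entirely determined by the action of the induced substitution map $A_2^*$ on the 19 generators $g_1, \ldots, g_{19}$, the first step is to express $A_2^*$ as an integer matrix in the basis of cohomology classes $[g_j] = g_j + \Im(\delta_1)$. Concretely, I would take the substitution matrix on $C^2$ (the $83 \times 83$ matrix recording how many times each collared tile appears in the substitution of each tile), verify that it descends to a well-defined endomorphism on the quotient $\ker(\delta_2)/\Im(\delta_1)$, and then read off its matrix $M$ with respect to the generating classes. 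This reduces the topological problem to the purely algebraic one of computing $\varinjlim(\mathbb{Z}^{18} \oplus \mathbb{Z}_2, M)$.

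The second step is to diagonalize, or at least block-triangularize, the action of $M$ so that the direct limit splits as a sum of limits of cyclic pieces. For each eigendirection the limit $\varinjlim(\mathbb{Z}, \times n)$ is the localization $\mathbb{Z}[\tfrac1n]$, so I expect the eigenvalues of $M$ to be responsible for the factors in the target group: the factor $\mathbb{Z}[\tfrac{1}{25}]$ should arise from a direction scaled by $25$ (which is natural, since the KRPinwheel substitution inflates areas by $5$ and two Pinwheel substitutions are needed, giving an area factor of $25$), the two copies of $\mathbb{Z}[\tfrac13]$ from eigenvalue-$3$ directions, and the five free $\mathbb{Z}$ summands from directions on which $M$ acts as $\pm 1$ (isomorphisms, whose limit is just $\mathbb{Z}$). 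I would compute the characteristic and minimal polynomials of $M$, put the free part into an appropriate integral normal form (Smith or rational canonical form adapted to the limit), and track how each invariant factor contributes.

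The third step is to handle the torsion. The $\mathbb{Z}_2$ summand generated by $g_{19} = t_{59} - 2t_{64} + \Im(\delta_1)$ requires care, because direct limits do not commute with the splitting $\mathbb{Z}^{18} \oplus \mathbb{Z}_2$ unless $M$ respects it; in general $M$ may send the torsion class into the free part or mix it with the other generators. I would therefore compute the image $A_2^*[g_{19}]$ explicitly, decompose it in terms of $[g_1], \ldots, [g_{19}]$, and check whether the $\mathbb{Z}_2$ survives in the limit. The cleanest outcome is that $M$ leaves the torsion class invariant (acting as multiplication by an odd integer, which is the identity on $\mathbb{Z}_2$), so that $\varinjlim(\mathbb{Z}_2, \cdot) \cong \mathbb{Z}_2$ persists to the final answer.

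The hard part will be step three together with the bookkeeping in step two: extracting a clean normal form for a specific $18 \times 18$ integer matrix while simultaneously controlling its interaction with the single $\mathbb{Z}_2$, and being certain that no additional torsion is created or destroyed in passing to the limit. The subtlety is that $\varinjlim$ is exact, so a short exact sequence $0 \to \mathbb{Z}_2 \to H^2 \to \mathbb{Z}^{18} \to 0$ of $M$-modules yields a long-exact (here six-term) sequence of limits, and I would use this to confirm that the torsion contributes exactly one $\mathbb{Z}_2$ and nothing more. Once the eigenvalue data ($25$, $3$, $3$, and five units) and the invariance of the torsion class are pinned down, the stated isomorphism $\check{H}^2(\Omega_0) \cong \mathbb{Z}[\tfrac{1}{25}] \oplus \mathbb{Z}[\tfrac13]^2 \oplus \mathbb{Z}^5 \oplus \mathbb{Z}_2$ follows by assembling the pieces.
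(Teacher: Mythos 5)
Your plan is essentially the paper's proof: both compute the matrix of $A_2^*$ on the nineteen generating classes, kill the (large) kernel, read off the eigenvalue data ($25$, $3$, $3$, and five eigenvalue-$1$ directions, exactly as you predict), obtain the localizations $\mathbb{Z}[\frac{1}{25}]\oplus\mathbb{Z}[\frac13]^2$ from a short exact sequence of direct limits that splits because the quotient is free, and check that $A_2^*$ fixes the $\mathbb{Z}_2$ class $g_{19}$ so it survives in the limit. The only cosmetic differences are that the paper organizes the reduction as quotient-by-kernel followed by splitting off identity coordinates rather than a single normal form, and your worry that the torsion class could land in the free part is vacuous (homomorphisms send torsion to torsion); otherwise the approaches coincide.
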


\begin{proof}
We have $H^2(\Gamma/S^1)\cong \mathbb{Z}^{18} \oplus \mathbb{Z}_2$. We write $A_2^*: H^2(\Gamma/S^1) \to H^2(\Gamma/S^1)$ as a matrix with respect to the 19 generators $g_i$ of $H^2(\Gamma/S^1)$ introduced in Equation \eqref{generator_h2}:
\[
A_2^*=
\left(\begin{smallmatrix} 25&0&0&0&0&0&0&0&0&0&0&0&0
&0&0&0&0&0&0\\ \noalign{\medskip}0&3&0&0&0&0&0&0&0&0&0&0&0&0&0&0&0&0&0
\\ \noalign{\medskip}0&0&3&0&0&0&0&0&0&0&0&0&0&0&0&0&0&0&0
\\ \noalign{\medskip}0&0&0&1&0&0&0&0&0&0&0&0&0&0&0&0&0&0&0
\\ \noalign{\medskip}0&0&0&0&1&0&0&0&0&0&0&0&0&0&0&0&0&0&0
\\ \noalign{\medskip}0&0&0&0&0&1&0&0&0&0&0&0&0&0&0&0&0&0&0
\\ \noalign{\medskip}0&0&0&0&0&0&0&0&0&0&0&0&0&0&0&0&0&0&0
\\ \noalign{\medskip}0&0&0&0&0&0&0&0&0&0&0&0&0&0&0&0&0&0&0
\\ \noalign{\medskip}0&0&0&0&0&0&0&0&0&0&0&0&0&0&0&0&0&0&0
\\ \noalign{\medskip}0&0&0&0&0&0&0&0&0&0&0&0&0&0&0&0&0&0&0
\\ \noalign{\medskip}0&0&0&0&0&0&0&0&0&0&0&0&0&0&0&0&0&0&0
\\ \noalign{\medskip}0&0&0&0&0&0&0&0&0&0&0&0&0&0&0&0&0&0&0
\\ \noalign{\medskip}0&0&0&0&0&0&0&0&0&0&0&0&0&0&0&0&0&0&0
\\ \noalign{\medskip}612&25&-150&0&0&0&110&0&-110&0&0&-110&0&1&0&0&0&0
&0\\ \noalign{\medskip}102&50&250&0&0&0&110&0&-110&0&0&-110&0&1&0&0&0&0
&0\\ \noalign{\medskip}567&100&1050&275&-275&825&110&0&-110&0&0&-110&0
&1&0&0&0&0&0\\ \noalign{\medskip}382&-50&300&0&550&-2200&110&0&-110&0&0
&-110&0&1&0&0&0&0&0\\ \noalign{\medskip}216&25&-150&0&0&0&-220&0&220&0
&0&220&0&0&0&0&0&1&0\\ \noalign{\medskip}0&0&0&0&0&0&0&0&0&0&0&0&0&0&0
&0&0&0&1\end{smallmatrix}\right)
\]
where the last coordinate is the generator of $\mathbb{Z}_2$. We compute the direct limit
\[
\check H^2(\Omega_0) =\displaystyle\lim_{\longrightarrow}\left( H^2(\Gamma), A_2^*\right).
\]
The following vectors generate the image $A_2^* H^2(\Gamma/S^1)$:
\[
\left \{ \left(\begin{smallmatrix} 25\\ \noalign{\smallskip} 0\\ \noalign{\smallskip} 0\\ \noalign{\smallskip} 0\\ \noalign{\smallskip} 0\\ \noalign{\smallskip} 0\\ \noalign{\smallskip} 0\\ \noalign{\smallskip} 0\\ \noalign{\smallskip} 0\\ \noalign{\smallskip} 0\\ \noalign{\smallskip} 0\\ \noalign{\smallskip} 0\\ \noalign{\smallskip} 0\\ \noalign{\smallskip} 0\\ \noalign{\smallskip} -510\\ \noalign{\smallskip} -45\\ \noalign{\smallskip} -230\\ \noalign{\smallskip} 0\\ \noalign{\smallskip} 0 \end{smallmatrix} \right), \left( \begin{smallmatrix} 0\\ \noalign{\smallskip} 3\\ \noalign{\smallskip} 0\\ \noalign{\smallskip} 0\\ \noalign{\smallskip} 0\\ \noalign{\smallskip} 0\\ \noalign{\smallskip} 0\\ \noalign{\smallskip} 0\\ \noalign{\smallskip} 0\\ \noalign{\smallskip} 0\\ \noalign{\smallskip} 0\\ \noalign{\smallskip} 0\\ \noalign{\smallskip} 0\\ \noalign{\smallskip} 0\\ \noalign{\smallskip} 25\\ \noalign{\smallskip} 75\\ \noalign{\smallskip} -75\\ \noalign{\smallskip} 0\\ \noalign{\smallskip} 0 \end{smallmatrix} \right), \left( \begin{smallmatrix} 0\\ \noalign{\smallskip} 0\\ \noalign{\smallskip} 3\\ \noalign{\smallskip} 0\\ \noalign{\smallskip} 0\\ \noalign{\smallskip} 0\\ \noalign{\smallskip} 0\\ \noalign{\smallskip} 0\\ \noalign{\smallskip} 0\\ \noalign{\smallskip} 0\\ \noalign{\smallskip} 0\\ \noalign{\smallskip} 0\\ \noalign{\smallskip} 0\\ \noalign{\smallskip} 0\\ \noalign{\smallskip} 400\\ \noalign{\smallskip} 1200\\ \noalign{\smallskip} 450\\ \noalign{\smallskip} 0\\ \noalign{\smallskip} 0 \end{smallmatrix} \right), \left( \begin{smallmatrix} 0\\ \noalign{\smallskip} 0\\ \noalign{\smallskip} 0\\ \noalign{\smallskip} 1\\ \noalign{\smallskip} 0\\ \noalign{\smallskip} 0\\ \noalign{\smallskip} 0\\ \noalign{\smallskip} 0\\ \noalign{\smallskip} 0\\ \noalign{\smallskip} 0\\ \noalign{\smallskip} 0\\ \noalign{\smallskip} 0\\ \noalign{\smallskip} 0\\ \noalign{\smallskip} 0\\ \noalign{\smallskip} 0\\ \noalign{\smallskip} 275\\ \noalign{\smallskip} 0\\ \noalign{\smallskip} 0\\ \noalign{\smallskip} 0 \end{smallmatrix} \right), \left( \begin{smallmatrix} 0\\ \noalign{\smallskip} 0\\ \noalign{\smallskip} 0\\ \noalign{\smallskip} 0\\ \noalign{\smallskip} 1\\ \noalign{\smallskip} 0\\ \noalign{\smallskip} 0\\ \noalign{\smallskip} 0\\ \noalign{\smallskip} 0\\ \noalign{\smallskip} 0\\ \noalign{\smallskip} 0\\ \noalign{\smallskip} 0\\ \noalign{\smallskip} 0\\ \noalign{\smallskip} 0\\ \noalign{\smallskip} 0\\ \noalign{\smallskip} -275\\ \noalign{\smallskip} 550\\ \noalign{\smallskip} 0\\ \noalign{\smallskip} 0 \end{smallmatrix} \right), \left( \begin{smallmatrix}0\\ \noalign{\smallskip} 0\\ \noalign{\smallskip} 0\\ \noalign{\smallskip} 0\\ \noalign{\smallskip} 0\\ \noalign{\smallskip} 1\\ \noalign{\smallskip} 0\\ \noalign{\smallskip} 0\\ \noalign{\smallskip} 0\\ \noalign{\smallskip} 0\\ \noalign{\smallskip} 0\\ \noalign{\smallskip} 0\\ \noalign{\smallskip} 0\\ \noalign{\smallskip} 0\\ \noalign{\smallskip} 0\\ \noalign{\smallskip} 825\\ \noalign{\smallskip} -2200\\ \noalign{\smallskip} 0\\ \noalign{\smallskip} 0 \end{smallmatrix} \right), \left( \begin{smallmatrix} 0\\ \noalign{\smallskip} 0\\ \noalign{\smallskip} 0\\ \noalign{\smallskip} 0\\ \noalign{\smallskip} 0\\ \noalign{\smallskip} 0\\ \noalign{\smallskip} 0\\ \noalign{\smallskip} 0\\ \noalign{\smallskip} 0\\ \noalign{\smallskip} 0\\ \noalign{\smallskip} 0\\ \noalign{\smallskip} 0\\ \noalign{\smallskip} 0\\ \noalign{\smallskip} 1\\ \noalign{\smallskip} 1\\ \noalign{\smallskip} 1\\ \noalign{\smallskip} 1\\ \noalign{\smallskip} 0\\ \noalign{\smallskip} 0 \end{smallmatrix} \right), \left( \begin{smallmatrix} 0\\ \noalign{\smallskip} 0\\ \noalign{\smallskip} 0\\ \noalign{\smallskip} 0\\ \noalign{\smallskip} 0\\ \noalign{\smallskip} 0\\ \noalign{\smallskip} 0\\ \noalign{\smallskip} 0\\ \noalign{\smallskip} 0\\ \noalign{\smallskip} 0\\ \noalign{\smallskip} 0\\ \noalign{\smallskip} 0\\ \noalign{\smallskip} 0\\ \noalign{\smallskip} 0\\ \noalign{\smallskip} 0\\ \noalign{\smallskip} 0\\ \noalign{\smallskip} 0\\ \noalign{\smallskip} 1\\ \noalign{\smallskip} 0 \end{smallmatrix} \right), \left( \begin{smallmatrix} 0\\ \noalign{\smallskip} 0\\ \noalign{\smallskip} 0\\ \noalign{\smallskip} 0\\ \noalign{\smallskip} 0\\ \noalign{\smallskip} 0\\ \noalign{\smallskip} 0\\ \noalign{\smallskip} 0\\ \noalign{\smallskip} 0\\ \noalign{\smallskip} 0\\ \noalign{\smallskip} 0\\ \noalign{\smallskip} 0\\ \noalign{\smallskip} 0\\ \noalign{\smallskip} 0\\ \noalign{\smallskip} 0\\ \noalign{\smallskip} 0\\ \noalign{\smallskip} 0\\ \noalign{\smallskip} 0\\ \noalign{\smallskip} 1 \end{smallmatrix} \right)\right\}.
\]
We can rewrite the matrix for $A_2^*$ in terms of these vectors:
\[
B:=\left(\begin{smallmatrix} 25&0&0&0&0&0&0&0&0
\\ \noalign{\medskip}0&3&0&0&0&0&0&0&0\\ \noalign{\medskip}0&0&3&0&0&0
&0&0&0\\ \noalign{\medskip}0&0&0&1&0&0&0&0&0\\ \noalign{\medskip}0&0&0
&0&1&0&0&0&0\\ \noalign{\medskip}0&0&0&0&0&1&0&0&0
\\ \noalign{\medskip}15300&75&-450&0&0&0&1&0&0\\ \noalign{\medskip}
5400&75&-450&0&0&0&0&1&0\\ \noalign{\medskip}0&0&0&0&0&0&0&0&1
\end {smallmatrix}\right)
\]
where the matrix $B$ is the unique homomorphism from $H^2(\Gamma/S^1)/\ker(A_2^*)$ to $H^2(\Gamma/S^1)/\ker(A_2^*)$ given by $B(g+\ker(A_2^*))=A_2^*g+\ker(A_2^*)$. Thus
 \[
\lim_{\longrightarrow}\left( H^2(\Gamma/S^1), A_2^*\right)\cong \lim_{\longrightarrow}\left( H^2(\Gamma/S^1)/\ker(A_2^*), B\right).
 \]

Using the generating set for $A_2^* H^2(\Gamma/S^1)$ above, we have $H^2(\Gamma/S^1)/\ker(A_2^*)\cong A_2^* H^2(\Gamma/S^1)\cong \mathbb{Z}^8 \oplus \mathbb{Z}_2$. Observe that the homomorphism $B$ is the identity on the 4th, 5th and 6th coordinates of $\mathbb{Z}^8$ and on $\mathbb{Z}_2$. Thus, $\check{H}^2(\Omega_0)$ is the direct sum of $\Z^3 \oplus \Z_2$ and $\displaystyle \lim_{\longrightarrow} ( \mathbb{Z}^5,  M)$
where
\[
M:=\left( \begin {smallmatrix} 25&0&0&0&0\\ \noalign{\medskip}0&3&0&0&0
\\ \noalign{\medskip}0&0&3&0&0\\ \noalign{\medskip}15300&75&-450&1&0
\\ \noalign{\medskip}5400&75&-450&0&1\end {smallmatrix} \right).
\]

The eigenvectors of $M$ are $(2,0,0,1275,450)$, $(0,6,1,0,0)$, and $(0,2,0,75,75)$ with eigenvalues $25$, $3$, and $3$ respectively. The quotient of  $\mathbb{Z}^5$ by the corresponding eigenspaces is isomorphic to $\mathbb{Z}^2$ and the quotient is generated by the classes of $(0,0,0,1,0)$ and $(0,0,0,0,1)$. We have the following short exact sequence
\begin{equation}\label{eq:seseq}
0 \to\mathbb{Z}^3 \to \mathbb{Z}^5 \to \mathbb{Z}^2 \to 0,
\end{equation}
where $\mathbb{Z}^3$ is the direct sum of the eigenspaces with eigenvalues $25, 3$ and $3$. Since $(0,0,0,1,0)$ and $(0,0,0,0,1)$ are eigenvectors of $M$ both with eigenvalue $1$, $M$ is the identity map on the quotient $\mathbb{Z}^2$. Taking direct limits of~\eqref{eq:seseq} gives the short exact sequence
\[
0 \to \mathbb{Z}[\frac{1}{25}] \oplus \mathbb{Z}[\frac{1}{3}]^2 \to \lim_{\longrightarrow} ( \mathbb{Z}^5,  M) \to \mathbb{Z}^2 \to 0.
\]
Since the quotient is free the sequence splits, and we have $\displaystyle \lim_{\longrightarrow} ( \mathbb{Z}^5,  M) = \mathbb{Z}[\frac{1}{25}] \oplus \mathbb{Z}[\frac{1}{3}]^2 \oplus \mathbb{Z}^2$. Hence 
\[
\check{H}^2(\Omega_0)=\lim_{\longrightarrow} ( \mathbb{Z}^5,  M) \oplus \mathbb{Z}^3 \oplus \mathbb{Z}_2 \cong \mathbb{Z}[\frac{1}{25}]\oplus \mathbb{Z}[\frac{1}{3}]^2 \oplus \mathbb{Z}^5 \oplus \Z_2. \qedhere
\]
\end{proof}

\section{Cohomology of $\Omega$}

Beginning with the integer cohomology of $\Omega_0$, the integer cohomology of $\Omega$ was computed in \cite[\S 7]{BDHS}. There are two steps in the computation, the first is to identify cone singularities and the second is to use a spectral sequence argument coming from the definition of the Gysin sequence. The second part of their computation was extremely difficult and we briefly outline the steps in the calculation. Using the results of \cite[\S 7]{BDHS} we give the integer cohomology of the Pinwheel tiling.

We begin by identifying the cone singularities. In the Pinwheel tiling, cone singularities correspond to Pinwheel tilings with $180^\circ$ symmetry. Rotational symmetry is preserved under substitution so we can identify cone singularities by finding periodic points under the substitution of $180^\circ$ symmetric patches of collared tiles. From Appendix \ref{collared} we find the following patches with $180^\circ$ symmetry and vertices at the centre of rotation that are periodic under the substitution.
\begin{center}
\begin{tikzpicture}[scale=1.0]

\begin{scope}[thick,xshift=0cm,yshift=0cm]
    \draw (0,0)--(2,0)--(2,1)--(0,1)--(0,0);
    \draw (-1,1)--(1,1)--(1,2)--(-1,2)--(-1,1);
    \pgfpathcircle{\pgfpoint{0.5cm}{1cm}} {1.5pt};
    \pgfusepath{fill}
    \node at (0,1.5) {$L23$};
    \node at (1,0.4) {$L23$};
    \node at (0.5,0.8) {$v_{70}$};
    \draw[->,thick] (0,-1)--(1,-1);
    \node at (0.5,-0.7) {$A_0$};
    \node at (1.3,-1) {$v_{71}$};
    \node at (-0.3,-1) {$v_{70}$};
\end{scope}

\begin{scope}[thick,xshift=4cm,yshift=0cm]
    \draw (0,1)--(2,1)--(2,2)--(0,2)--(0,1);
    \draw (-1,0)--(1,0)--(1,1)--(-1,1)--(-1,0);
    \pgfpathcircle{\pgfpoint{0.5cm}{1cm}} {1.5pt};
    \pgfusepath{fill}
    \node at (1,1.5) {$R23$};
    \node at (0,0.4) {$R23$};
    \node at (0.5,0.8) {$v_{71}$};
    \draw[->,thick] (0,-1)--(1,-1);
    \node at (0.5,-0.7) {$A_0$};
    \node at (1.3,-1) {$v_{70}$};
    \node at (-0.3,-1) {$v_{71}$};
\end{scope}

\begin{scope}[thick,xshift=8cm,yshift=0cm]
    \draw (0,0)--(2,0)--(2,1)--(0,1)--(0,0);
    \pgfpathcircle{\pgfpoint{1cm}{0.5cm}} {1.5pt};
    \pgfusepath{fill}
    \node at (1,1.3) {$L7$};
    \node at (1,0.3) {$v_{L7}$};
    \draw[->,thick] (0.5,-1)--(1.5,-1);
    \node at (1,-0.7) {$A_0$};
    \node at (1.8,-1) {$v_{R7}$};
    \node at (0.2,-1) {$v_{L7}$};
\end{scope}

\begin{scope}[thick,xshift=12cm,yshift=0cm]
    \draw (0,0)--(2,0)--(2,1)--(0,1)--(0,0);
    \pgfpathcircle{\pgfpoint{1cm}{0.5cm}} {1.5pt};
    \pgfusepath{fill}
    \node at (1,1.3) {$R7$};
    \node at (1,0.3) {$v_{R7}$};
    \draw[->,thick] (0.5,-1)--(1.5,-1);
    \node at (1,-0.7) {$A_0$};
    \node at (1.8,-1) {$v_{L7}$};
    \node at (0.2,-1) {$v_{R7}$};
\end{scope}

\begin{scope}[thick,xshift=3cm,yshift=-4cm]
    \draw (0,0)--(2,0)--(2,1)--(0,1)--(0,0);
    \draw (0,1)--(2,1)--(2,2)--(0,2)--(0,1);
    \draw (-2,1)--(0,1)--(0,2)--(-2,2)--(-2,1);
    \draw (-2,0)--(0,0)--(0,1)--(-2,1)--(-2,0);
    \pgfpathcircle{\pgfpoint{0cm}{1cm}} {1.5pt};
    \pgfusepath{fill}
    \node at (-1,1.5) {$R22$};
    \node at (-1,0.5) {$L22$};
    \node at (1,1.5) {$L22$};
    \node at (1,0.5) {$R22$};
    \node at (0.3,0.8) {$v_{65}$};
    \draw[->,thick] (-0.5,-1)--(0.5,-1);
    \node at (0,-0.7) {$A_0$};
    \node at (0.8,-1) {$v_{65}$};
    \node at (-0.8,-1) {$v_{65}$};
\end{scope}

\begin{scope}[thick,xshift=10cm,yshift=-4cm]
    \draw (0,0)--(0,2)--(-1,2)--(-1.6,1.2)--(0,0);
    \draw (0,0)--(0,-2)--(1,-2)--(1.6,-1.2)--(0,0);
    \draw (0,0)--(0,1)--(2,1)--(0.8,-0.6)--(0,0);
    \draw (0,0)--(0,-1)--(-2,-1)--(-0.8,0.6)--(0,0);
    \pgfpathcircle{\pgfpoint{0cm}{0cm}} {1.5pt};
    \pgfusepath{fill}
    \node at (-0.6,1.3) {$K14$};
    \node at (0.6,-1.3) {$K14$};
    \node at (-0.9,-0.5) {$K31$};
    \node at (0.9,0.5) {$K31$};
    \node at (-0.3,-0.1) {$v_{23}$};
    \draw[->,thick] (-0.5,-3)--(0.5,-3);
    \node at (0,-2.7) {$A_0$};
    \node at (0.8,-3) {$v_{23}$};
    \node at (-0.8,-3) {$v_{23}$};
\end{scope}
\end{tikzpicture}
\end{center}
Placing the vertex of rotation at the origin of $\R^2$ and substituting an infinite number of times leads to a tiling of the plane with $180^\circ$ symmetry \cite[Theorem 1.4]{Lorenzo.book}. Therefore, there are 6 cone singularities. We note that there are four additional patches with $180^\circ$ symmetry (using tiles $L3$, $L26$, $R3$, and $R26$), however the vertex at the centre of rotation is not periodic under the substitution so they do not contribute to the number of cone singularities. 

We now briefly outline the spectral sequence argument in \cite[\S 7]{BDHS} used to compute $\check{H}^*(\Omega)$. The Gysin sequence \cite[p.177]{BT} is used to compute the cohomology of an $S^1$-fibre bundle over the base space from the cohomology of the base space. Unfortunately, due to the presence of cone singularities, the map $\Omega \to \Omega_0$ is not quite a fibration. However, the spectral sequence used to prove the Gysin sequence can be adjusted to account for these cone singularities. In \cite{BDHS}, the authors show that the generators of the cone singularities account for 5 linearly independent torsion elements contributing a $\Z_2^5$ term to $\check{H}^2(\Omega)$. The relevant results of \cite{BDHS} are summarised in the following theorem.

\begin{thm}[Barge, Diamond, Hunton, and Sadun {\cite[\S7 and Theorem 12]{BDHS}}]
Suppose $\Omega$ is the continuous hull of the Pinwheel tiling and $\Omega_0 = \Omega/S^1$. Given $\check{H}^*(\Omega_0)$ and the cone singularities corresponding to tilings with $180^\circ$-rotational symmetry. The integer cohomology groups are given by
\begin{align*}
\check{H}^0(\Omega) &= \check{H}^0(\Omega_0) = \Z \\
\check{H}^1(\Omega) &= \check{H}^0(\Omega_0) \oplus \check{H}^1(\Omega_0) = \Z \oplus \Z \\
\check{H}^2(\Omega) &= E^\infty_{1,1} \oplus E^\infty_{2,0} = \mathbb{Z}[\frac{1}{25}]\oplus\mathbb{Z}[\frac{1}{3}]^2\oplus \mathbb{Z}^6 \oplus \mathbb{Z}_2^5 \\
\check{H}^3(\Omega) &= \check{H}^2(\Omega_0) = \mathbb{Z}[\frac{1}{25}]\oplus\mathbb{Z}[\frac{1}{3}]^2\oplus \mathbb{Z}^5 \oplus \mathbb{Z}_2
\end{align*}
where the terms $E^\infty_{1,1}=\Z \oplus \Z_2^5$ and $E^\infty_{2,0}= \mathbb{Z}[\frac{1}{25}]\oplus\mathbb{Z}[\frac{1}{3}]^2\oplus \mathbb{Z}^5$ come from the spectral sequence $E^\infty$ which adjusts for the cone singularities and the torsion in $\check{H}^2(\Omega_0)$.
\end{thm}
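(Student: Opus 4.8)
The plan is to study the orbit map $\pi\colon\Omega\to\Omega_0$ by a Leray--Serre spectral sequence that is then corrected for the six singular orbits. Away from the tilings with $180^\circ$ symmetry the circle acts freely, so $\pi$ restricts to a principal $S^1$-bundle; over such a base the spectral sequence has only the two fibre rows $q=0,1$, with $E_2^{p,q}\cong\check H^p(\Omega_0)$ and a single possibly nonzero differential $d_2\colon E_2^{p,1}\to E_2^{p+2,0}$ equal to cup product with the Euler class $e\in\check H^2(\Omega_0)$; this is exactly the mechanism of the Gysin sequence \cite[p.177]{BT}. First I would record the free-model $E_2$ page from Theorem~\ref{Cech}, so that $E_2^{1,0}\cong\check H^1(\Omega_0)\cong\Z$, $E_2^{0,1}\cong\check H^0(\Omega_0)\cong\Z$, $E_2^{2,0}\cong E_2^{2,1}\cong\check H^2(\Omega_0)\cong\Z[\tfrac1{25}]\oplus\Z[\tfrac13]^2\oplus\Z^5\oplus\Z_2$, and $E_2^{3,0}\cong\check H^3(\Omega_0)=0$.

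Next I would correct the fibre row for the six cone singularities identified above. Each singular orbit carries a $\Z_2$ stabiliser, so near it $\pi$ has the local form $(D^2\times S^1)/\Z_2\to D^2/\Z_2$, and the $\Z_2$ quotient of the fibre replaces the honest $H^1(S^1)=\Z$ by a group with $2$-torsion. Following \cite[\S7]{BDHS}, the six cone points each want to contribute a copy of $\Z_2$ to the $q=1$ row, but one global relation among them reduces the total contribution, so that the surviving term is $E^\infty_{1,1}\cong\Z\oplus\Z_2^5$. Because $\check H^3(\Omega_0)=0$ and there are only two fibre rows, no differential can enter or leave $E_2^{1,0}$, $E_2^{2,1}$, or $E_2^{1,1}$, so $E^\infty_{1,0}\cong\Z$ and $E^\infty_{2,1}\cong\check H^2(\Omega_0)$ both survive unchanged.

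The remaining work is the single differential $d_2\colon E_2^{0,1}\to E_2^{2,0}$. I would identify the Euler class $e=d_2(1)$ with the order-two class $t_{59}-2t_{64}+\Im(\delta_1)$ that generates the $\Z_2$ summand of $\check H^2(\Omega_0)$ in Proposition~\ref{gamma/S}; then the image of $d_2$ is precisely that $\Z_2$, giving $E^\infty_{2,0}\cong\check H^2(\Omega_0)/\langle e\rangle\cong\Z[\tfrac1{25}]\oplus\Z[\tfrac13]^2\oplus\Z^5$, while the kernel of $d_2$ is a copy of $\Z$, so $E^\infty_{0,1}\cong\Z$. Reading the $E^\infty$ page off by total degree then gives $\check H^0(\Omega)\cong\Z$, $\check H^1(\Omega)\cong E^\infty_{1,0}\oplus E^\infty_{0,1}\cong\Z\oplus\Z$, $\check H^2(\Omega)\cong E^\infty_{2,0}\oplus E^\infty_{1,1}$, and $\check H^3(\Omega)\cong E^\infty_{2,1}\cong\check H^2(\Omega_0)$, after checking that the filtration extension $0\to E^\infty_{2,0}\to\check H^2(\Omega)\to E^\infty_{1,1}\to 0$ in total degree two splits.

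The hard part will be the correct treatment of the cone singularities inside the spectral sequence: showing that the local $\Z_2$-isotropy of each of the six orbits perturbs the fibre row in the claimed way, and in particular proving that the six potential $\Z_2$ contributions satisfy exactly one relation so as to yield $\Z_2^5$ rather than $\Z_2^6$. This is the delicate local-to-global computation carried out in \cite[\S7]{BDHS}, and it is made more subtle by the pre-existing $\Z_2$ torsion already present in $\check H^2(\Omega_0)$, which must be carefully distinguished from the torsion created by the singular fibres both when computing $d_2$ and when resolving the degree-two extension.
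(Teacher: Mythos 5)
Your outline matches the paper's treatment: the paper likewise identifies the six cone singularities, sets up the Gysin-type spectral sequence for $\Omega\to\Omega_0$, and defers the two delicate points --- that the six local $\Z_2$ contributions satisfy exactly one relation, giving $E^\infty_{1,1}=\Z\oplus\Z_2^5$, and that $d_2$ kills precisely the $\Z_2$ summand of $\check H^2(\Omega_0)$ --- to \cite[\S 7]{BDHS}, exactly as you do. Since the paper states this theorem as a citation of Barge--Diamond--Hunton--Sadun and offers no further proof beyond the outline you reproduce, your proposal is essentially identical to the paper's argument.
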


\newpage

\begin{appendix}

\section{The collared tiles}\label{collared}

\begin{center}
 \right) \right \}.$ }
 \end{center}
 

\end{appendix}

\end{document}